\newif\ifpreprint
\preprinttrue  % Preprint publication. Comment for Journal version
% Note: You need to recompile from scratch when you switch between versions

\ifpreprint
% If preprint version

% normal latex documentclass stuff and packages for barebone format

\documentclass[12pt]{article}
\usepackage{fullpage}

% Prevent words to overflow over the margin
% \sloppy  % Alternative command
\emergencystretch 3em

\title{End-to-End Learning to Warm-Start for \\Real-Time Quadratic Optimization}
\author{Rajiv Sambharya$^1$, Georgina Hall$^2$, Brandon Amos$^3$, and Bartolomeo Stellato$^1$}
\date{%
    $^1$Princeton University\\%
    $^2$INSEAD\\
    $^3$Meta AI\\[2ex]%
    \today
}

% Bibliography
% \bibliographystyle{alpha}
\usepackage[round,authoryear]{natbib}
\bibliographystyle{abbrvnat}
% \usepackage[authoryear,numbers]{natbib}
% \\usepackage\[numbers\]{natbib}
\usepackage{amsmath,enumitem,mathtools,amsthm,amssymb}

\renewcommand\arraystretch{1.2}
\newtheorem{theorem}{Theorem}
\newtheorem{lemma}[theorem]{Lemma}
\newtheorem{corollary}[theorem]{Corollary}
\usepackage{hyperref}
\newcommand{\acks}[1]{\section*{Acknowledgments}#1}
\usepackage[algo2e,ruled]{algorithm2e}

\else
% L4DC version

% documentclass from journal latex submission preamble files
\documentclass{l4dc2023}

\renewcommand\arraystretch{1.2}

\title[End-to-End Learning to Warm-Start QPs]{End-to-End Learning to Warm-Start \\for Real-Time Quadratic Optimization}
% Use \Name{Author Name} to specify the name.
% If the surname contains spaces, enclose the surname
% in braces, e.g. \Name{John {Smith Jones}} similarly
% if the name has a "von" part, e.g \Name{Jane {de Winter}}.
% If the first letter in the forenames is a diacritic
% enclose the diacritic in braces, e.g. \Name{{\'E}louise Smith}

% Two authors with the same address
% \coltauthor{\Name{Author Name1} \Email{abc@sample.com}\and
%  \Name{Author Name2} \Email{xyz@sample.com}\\
%  \addr Address}

% Three or more authors with the same address:
% \coltauthor{\Name{Author Name1} \Email{an1@sample.com}\\
%  \Name{Author Name2} \Email{an2@sample.com}\\
%  \Name{Author Name3} \Email{an3@sample.com}\\
%  \addr Address}

% Authors with different addresses:
\author{%
 \Name{Rajiv Sambharya} \Email{rajivs@princeton.edu}\\
 \addr Operations Research and Financial Engineering, Princeton University, Princeton, NJ, USA
 \AND
 \Name{Georgina Hall} \Email{georgina.hall@insead.edu}\\
 \addr Decision Sciences, INSEAD, Fontainebleau, France
 \AND
 \Name{Brandon Amos} \Email{bda@fb.com}\\
 \addr Meta AI, New York City, NY, USA
 \AND
 \Name{Bartolomeo Stellato} \Email{bstellato@princeton.edu}\\
 \addr Operations Research and Financial Engineering, Princeton University, Princeton, NJ, USA
}

% Smaller captions in figures and tables.
\usepackage{caption}
\captionsetup[figure]{font=small}
\captionsetup[table]{font=small}

\fi

\usepackage{todonotes}

\definecolor{bdacolor}{RGB}{168, 141, 201}

\definecolor{ghcolor}{RGB}{48,213,200}

\usepackage{verbatim}
\usepackage{multicol}
\usepackage{algorithm}% http://ctan.org/pkg/algorithms
\usepackage[noend]{algpseudocode} % Avoid "end" and make it look cleaner
% Tables
\usepackage{csvsimple}	% reading CSV files in tables
\usepackage{booktabs}   % Nicer Tables
\usepackage[round-mode=places, round-integer-to-decimal, round-precision=4,
    table-format = 1.4, 
    table-number-alignment=center,
    round-integer-to-decimal]{siunitx}
\usepackage{adjustbox}  % To adjust table length

% % commands added

% Commands
\newcommand{\Ak}{T^k_{\theta}}
\newcommand{\Aki}{T^k_{\theta_i}}
\newcommand{\op}{T_{\theta}}

\newcommand{\Lo}{\ell_{\theta}}
\newcommand{\Loi}{\ell_{\theta_i}}
\newcommand{\hw}{h_{\mathcal{W}}}

\newcommand{\zwh}{\hat{z}_{\mathcal{W}}(\theta)}
\newcommand{\pfc}{2 \beta^k}
\newcommand{\pfct}{2 \sqrt{2} \beta^k}
\newcommand{\pfcf}{4 \sqrt{2} \beta^k}

\newcommand{\fath}{\forall \theta \in \Theta}

\DeclareMathOperator*{\argmax}{arg\,max}

\newcommand{\ie}{{\it i.e.}}

% Sets

\newcommand{\reals}{{\mbox{\bf R}}}

\newcommand{\symm}{{\mbox{\bf S}}}  % symmetric matrices
  % symmetric matrices

\newcommand{\Expect}{\mathop{\bf E{}}}

 % convex hull
\newcommand{\dist}{{\bf dist{}}}
\newcommand{\fix}{\mathop{\bf fix{}}}
\newcommand{\vect}{\mathop{\bf vec{}}}
 % epigraph

 % domain

\newcommand{\Rad}{{\bf rad}}
\newcommand{\ERad}{{\bf erad}}

% Trees

% Strategy

% Machine learning

% Abbreviations for citations
\newcommand{\Sec}{Sec.\;}
\newcommand{\Thm}{Thm.\;}
\newcommand{\Page}{pp\;}
\newcommand{\Exmp}{Ex.\;}
\newcommand{\Eqn}{Eq.\;}
\newcommand{\Cor}{Cor.\;}
\newcommand{\Prop}{Prop.\;}

\begin{document}

\maketitle

\begin{abstract}
%\bda{not clear what end-to-end means here since it could have many possible definitions. After this sentence we could better-define it to mean end-to-end learning the initialization by differentiating through the DR iterates} 
%\bda{should we call the DR iterates part of the model or loss since we only parameterize the initialization?}
% \bda{based on Rademacher complexity}
% \bda{Guaranteeing is a strong word since it seems hard to use the bound for guarantees in finite cases} 
% First-order methods are widely used in solving convex quadratic programs (QPs) in real-time applications because of their low per-iteration cost. 
% However, they can suffer from slow convergence to achieve accurate solutions. 
% We present an end-to-end framework to learn an effective warm-start for Douglas-Rachford splitting, a well-known first-order method, across a family of parametric QPs.
% To learn a good initial point from data in an end-to-end fashion, we backpropagate through Douglas-Rachford iterations.
% We provide generalization bounds based on Rademacher complexity that improve with the number of training problems and the number of evaluation iterations. 
% We showcase the ability of our method to learn good warm-starts in several numerical examples, obtaining a significant reduction in number of iterations required to obtain high quality solutions.
First-order methods are widely used to solve convex quadratic programs (QPs) in real-time applications because of their low per-iteration cost. 
However, they can suffer from slow convergence to accurate solutions. 
% In this paper, we focus on a specific first-order method, Douglas-Rachford splitting. 
In this paper, we present a framework which learns an effective warm-start for a popular first-order method in real-time applications, Douglas-Rachford (DR) splitting, across a family of parametric QPs.
% across a parametric family of QPs. 
This framework consists of two modules: a feedforward neural network block, which takes as input the parameters of the QP and outputs a warm-start, and a block which performs a fixed number of iterations of DR splitting from this warm-start and outputs a candidate solution. 
A key feature of our framework is its ability to do end-to-end learning as we differentiate through the DR iterations. 
To illustrate the effectiveness of our method, we provide generalization bounds (based on Rademacher complexity) that improve with the number of training problems and number of iterations simultaneously. 
We further apply our method to three real-time applications  and observe that, by learning good warm-starts, we are able to significantly reduce the number of iterations required to obtain high-quality solutions.
\end{abstract}

\ifpreprint \else
\begin{keywords}%
  Machine learning, real-time optimization, quadratic optimization, warm-start, generalization bounds.
\end{keywords}
\fi

% So it is important to be able to solve convex problems very quickly.

% For example, in many control problems it is common to optimize over a time length that is shorter than the full horizon length of the problem and to implement only the first control input. Then using the new initial point, a new problem is formed, and we repeat the process. This strategy is called model predictive control (MPC). These problems often need to be solved on the order of milliseconds. MPC problems are parametric in nature since they depend on some parameter, the initial state, that varies from problem to problem.

\section{Introduction}
We consider the problem of solving convex quadratic programs (QPs) within strict real-time computational constraints using first-order methods. 
QPs arise in various real-time applications in robotics~\citep{tedrake_robotics_qp}, control~\citep{borrelli_mpc_book}, and finance~\citep{cvx_portfolio}. 
In the past decade, first-order methods have gained a wide popularity in real-time quadratic optimization~\citep{Boyd_admm, fom_book, lscomo} because of their low per-iteration cost and their warm-starting capabilities. 
However, they still suffer from slow convergence to the optimal solutions, especially for badly-scaled problems~\citep{fom_book}. 
As a workaround to this issue, one can make use of the oftentimes parametric nature of the QPs which feature in real-time applications.  
For example, one can use the solution to a previously solved QP as a warm-start to a new problem~\citep{qp_oases,osqp}.
% ~\citep{panoc,alpaqa,diffalg_nmpc, advanced_nmpc, gmres_nmpc}. 
% \bnote{Not convinced about these citations. They refer to something different.}
While this approach is popular, it only makes use of the data from the previous problem, neglecting the vast majority of data available.
More recent approaches in machine learning have sought to exploit data by solving many different parametric problems offline to learn a direct mapping from the parameters to the optimal solutions. 
The learned solution is then used as a warm-start~\citep{mpc_primal_active_sets,warm_start_power_flow}.
These approaches require solving many optimization problems to optimality, which can be expensive, and they also do not take into consideration the characteristics of the algorithm that will run on this warm-start downstream. 
Furthermore, such learning schemes often do not provide generalization guarantees~\citep{amos_tutorial} on the algorithmic performance on unseen data.
Such guarantees are crucial for real-time and safety critical applications where the algorithms must return high-quality solutions within strict time limits.

\paragraph{Contributions.} In this work, we exploit data to learn a mapping from the parameters of the QP to a warm-start of a popular first-order method, Douglas-Rachford (DR) splitting. 
The goal is to decrease the number of real-time iterations of DR splitting that are required to obtain a good-quality solution in real-time.
Our contributions are the following:
\begin{figure}[t!]
\centering
%\vspace{-55mm}
\ifpreprint
\includegraphics[width=0.9\columnwidth]{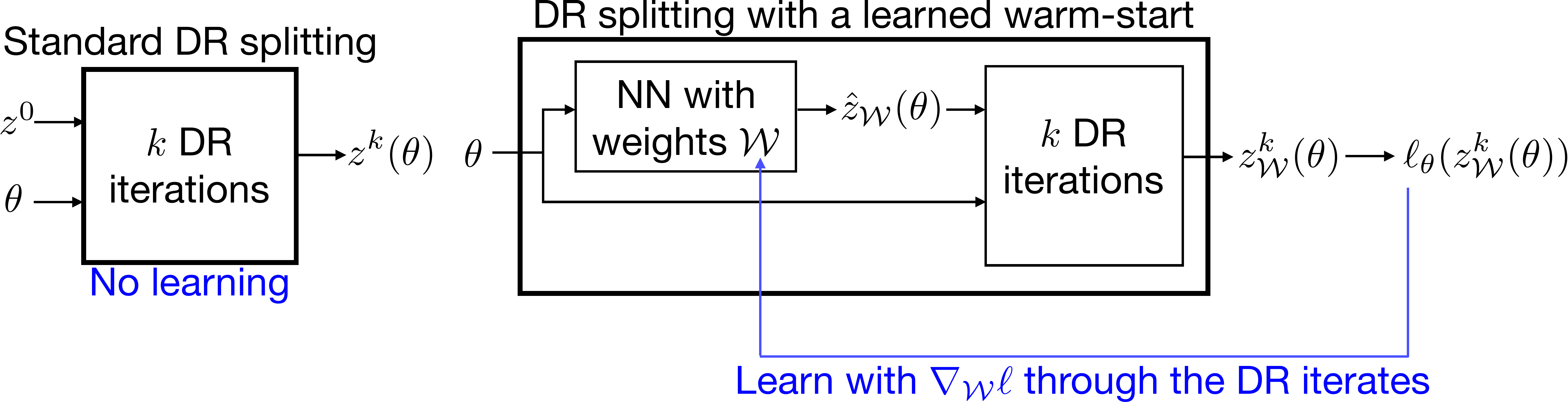}
\else
\includegraphics[width=.8\columnwidth]{figures/learning_framework_diagram-crop.pdf}
\fi
% \includegraphics[width=.9\columnwidth]{}
%\vspace{-75mm}
\caption{Left: standard DR splitting which maps parameter $\theta$ and initialization $z^0$ to an approximate solution $z^k(\theta)$. 
Right: Proposed learning framework consisting of two modules.
The first module is the NN block which maps the parameter $\theta$ to a warm-start $\zwh$. 
The weights of the NN, denoted by $\mathcal{W}$, are the only variables we optimize over. %in our learning process.
% It holds the only variables that we optimize over in the learning process which are the set of weights in the NN, denoted as $\mathcal{W}$,.
The second module runs $k$ iterations of DR splitting (which also depend on $\theta$) starting with the warm-start $\zwh$ and returning a candidate solution~$z^k_{\mathcal{W}}(\theta)$. 
We backpropagate from the loss $\Lo(z^k_{\mathcal{W}}(\theta))$ through the DR iterates to learn the optimal weights $\mathcal{W}$.
% Note that the DR iterations depend on $\theta$, and there are no weights in the second module that we search over in our learning problem. 
% \bda{Again, should we see the DR iterates as the model or loss?}
}
\ifpreprint \else
\vspace{-1.5em}
\fi
\label{fig:nn_architecture}
\end{figure}
\begin{itemize}
    \item We propose a principled framework to learn high quality warm-starts from data. This framework consists of two modules as indicated in Figure \ref{fig:nn_architecture}. The first module is a feedforward neural network (NN) that predicts a warm-start from the problem parameters. The second module consists of $k$ DR splitting iterations that output the candidate solution. We differentiate the loss function with respect to the neural network weights by backpropagating through the DR iterates, which makes our framework an end-to-end warm-start learning scheme. Furthermore, our approach does not require us to solve optimization problems offline. 
    \item We combine operator theory and Rademacher complexity theory to obtain novel generalization bounds that guarantee good performance for parametric QPs with unseen data. The bounds improve with the number of training problems and the number of DR iterations simultaneously, thereby allowing great flexibility in our learning task. 
    %\gh{Can we add something stronger here? I.e., first bounds of this type?}
    % The bounds recover the same convergence rates that DR splitting gives.
    % Additionally, the generalization bounds improve as the number of training problems increases.
    \item We benchmark our approach on real-time quadratic optimization examples, showing that our method can produce an excellent warm-start that reduces the number of DR iterations required to reach a desired accuracy by at least $30 \%$ and as much as $90\%$ in some cases.
\end{itemize}

\section{Related work}\label{section:related_work}

\paragraph{Learning warm-starts.}
A common approach to reduce the number of iterations of iterative algorithms is to learn a mapping from problem parameters to high-quality initializations. 
% Many works use machine learning to learn the mapping from parameters to solutions. 
% Since feasibility is not guaranteed, often, the prediction is used as a warm-start to an algorithm or some fine-tuning steps are done.
In optimal power flow, \cite{warm_start_power_flow} trains a random forest to predict a warm-start.
% Learning a good warm-start from data has drawn significant interest from the control community.
In the model predictive control (MPC)~\citep{borrelli_mpc_book},~\cite{mpc_primal_active_sets} use a neural network to accelerate the optimal control law computation by warm-starting an active set method.
Other works in MPC use machine learning to predict an approximate optimal solution and, instead of using it to warm-start an algorithm, they directly ensure feasibility and optimality.
% and often fine-tune the solution to improve its quality.
% To ensure feasibility, \cite{deep_learning_mpc_karg} uses a neural network to predict the solution and then projects the neural network output onto the invariant set to ensure feasibility.
\cite{mpc_constrained_neural_nets} and \cite{deep_learning_mpc_karg} use a constrained neural network architecture that guarantees feasibility by projecting its output onto the QP feasible region.
\cite{borrelli_mpc_primal_dual} uses a neural network to predict the solution while also certifying suboptimality of the output.
% various techniques are used such as projecting onto the invariant set \citep{deep_learning_mpc_karg}
% In these works, the machine learning models do not consider that the prediction will be used as a warm-start for an algorithm.
In these works, the machine learning models do not consider that additional algorithmic steps will be performed after warm-starting.
Our work differs in that the training of the NN is designed to minimize the loss after many steps of DR splitting.
Additionally, our work is more general in scope since we consider general parametric QPs.

\paragraph{Learning algorithm steps.}
% \begin{itemize}
%     \item Semi-amortized stuff
%     \item Unrolling stuff. VeLO work 
% \end{itemize}
% The machine learning and optimization communities are exploring methods to tune the steps in algorithms to speed up solvers.
There has been a wide array of works to speedup machine learning tasks by tuning algorithmic steps of stochastic gradient descent methods~\citep{learning_to_optimize_malik, learn_learn_gd_gd,VeLO,l2o,amos_tutorial}. 
% provide a survey of many emerging areas of learning to optimize algorithms from data. 
% \cite{learning_to_optimize_malik} uses reinforcement learning to learn algorithm steps.
% In RLQP, \cite{rlqp} use reinforcement learning to help find a good step size to solve similar quadratic programs. 
% \cite{learn_learn_gd_gdle, arning_to_optimize_malik} and 
Similarly,~\cite{lista} and~\cite{alista} accelerate the solution of sparse encoding problems by learning the step-size of the iterative soft thresholding algorithm. 
Operator splitting algorithms~\citep{lscomo} can also be speed up by learning acceleration steps~\citep{neural_fp_accel_amos} or the closest contractive fixed-point iteration to achieve fast convergence~\citep{closest_fp_op}.
% OPERATOR STUFF
% In neural SCS, \cite{neural_fp_accel_amos} speed up a convex optimization solver by learning an acceleration model to accelerate fixed point iterations. 
% \cite{l2o} describe other emerging areas of learning optimizers from data. 
% They discuss other modeling paradigms such as plug-and-play methods. 
% \cite{closest_fp_op} aim to learn the closest algorithmic map that achieves linear convergence. 
% ~\cite{lista} and ~\cite{alista} embed the iterative soft thresholding algorithm updates to predict solutions to a sparse encoding problem.
% \cite{amos_tutorial} writes a survey on learning to optimize from data and describes the tactic called loop unrolling where the iterations of an optimization procedure are integrated into the model.
% writes a tutorial on methods for amortized learning.
Reinforcement learning has also gained popularity as a versatile technique to accelerate the solution of parametric QPs by learning a policy to tune the step size of first-order methods~\citep{rlqp,qp_accelerate_rho}.
% learns to adaptively adjust the step size to accelerate ADMM, another first-order method.
A common tactic in these works is to differentiate through the steps of an algorithm to minimize a performance loss using gradient-based method. This known as {\it loop unrolling} which has been used in other areas such as meta-learning~\citep{maml} and variational autoencoders~\citep{semi_amortiized_vae}.
While we also unroll the algorithm iterations, our works differs in that we learn a high-quality warm-start rather than the algorithm steps.
% In all of these works, the algorithm steps are learned.
% Our work differs since we keep the algorithm fixed and seek to learn a good warm-start instead.
% Since the hyperparameters are updated the methods may not converge. 
This allows us to guarantee convergence and also provide generalization bounds over the number of iterations.

\paragraph{Learning surrogates.}
% Often, optimization problems can be approximated by surrogate problems: problems that are simpler or smaller than the original.
Instead of solving the original parametric problem, several works aim to learn a surrogate model that can be solved quickly in real-time applications.
For example, by predicting which constraints are active~\citep{learn_active_sets} and the value of the optimal integer solutions~\citep{voice_optimization,online_milliseconds} we can significantly accelerate the real-time solution of mixed-integer convex programs by solving, instead, a surrogate low-dimensional convex problem.
% consider mixed integer optimization problems and use machine learning to learn the integer solutions and the optimal active sets so that only a convex optimization problem needs to be solved online.
%  predicts the optimal active set before returning the solution.
Other approaches lean a mapping to reduce the dimensionality of the decision variables in the surrogate problem~\citep{surrogate_learning}.
This is achieved by embedding such problem as an implicit layer of a neural network and differentiating its KKT optimality conditions~\citep{diff_opt,amos2018differentiable, diffcp2019}.
% learns a low-dimensional surrogate model to solve optimization problems with unknown parameters quickly.
% Sometimes, these methods use the surrogate optimization problem as a layer in the learning scheme.
% Differentiable optimization is a method to compute gradients of optimization problem solutions with respect to their input parameters 
% Since the surrogate models are approximations of the original problem, optimality cannot be guaranteed.
% Since our method does not approximate the original problem, with enough iterations we can guarantee convergence to optimality.
In contrast, our method does not approximate any problem and, instead, we predict a warm-start of the algorithmic procedure with a focus on real-time computations. 
This allows us to clearly quantify the suboptimality achieved within a fixed number of real-time iterations.

\section{End-to-end learning framework} \label{sec:framework}
\paragraph{Problem formulation.}
We consider the following parametric (convex) QP
\begin{equation}\label{prob:primal_dual_prob}
\begin{array}{ll} \mbox{minimize} & (1/2) x^T P x + c^T x \\
\mbox{subject to} & Ax + s = b \\
& s \geq 0
% \end{array} \quad \begin{array}{ll} \mbox{maximize} & - (1/2) x^T P x - b^T y \\
% \mbox{subject to} & A^T y + Px + c = 0 \\
% & y \geq 0,
\end{array}
\quad \text{ with parameter } \quad \theta = (\vect(P), \vect(A), c, b) \in \reals^d,
\end{equation}
and decision variables $x \in \mathbf{R}^n$ and $s \in \mathbf{R}^m$. 
Here, $P$ is a positive semidefinite matrix in $\symm_+^{n \times n}$, $A$ is a matrix in $\reals^{m \times n}$, and $b$ and $c$ are vectors in $\reals^m$ and $\reals^n$ respectively. 
For a matrix $Y$, $\vect(Y)$ denotes the vector obtained by stacking the columns of $Y$. 
The dimension $d$ of $\theta$ is upper bounded by $mn+n^2+m+n$, but can be smaller in the case where only some of the data changes across the problems. 
% \bda{We also flatten the dimensions of the parameters so they are a vector $\theta \in \mathbf{R}^d$.}
% In the following, we assume that $\theta$ is drawn from a distribution $\mathcal{D}$ accessible through samples.
Our goal is to quickly solve the QP in~\eqref{prob:primal_dual_prob} with $\theta$ randomly drawn from a distribution $\mathcal{D}$ with compact support set $\Theta$, assuming that it admits an optimal solution for any $\theta \in \Theta$.

\paragraph{Optimality conditions.}
% \bda{this equation is now just showing the primal problem}
The KKT optimality conditions of problem~\eqref{prob:primal_dual_prob}, that is, primal feasibility, dual feasibility, and complementary slackness, are given by
\begin{equation}\label{eq:KKT}
Ax + s = b, \quad A^T y + Px + c = 0, \quad s \geq 0, \quad y \geq 0, \quad s \perp y = 0,
\end{equation}
% \bda{for the complimentary slackness condition} 
where $y \in \mathbf{R}^m$ is the dual variable to problem~\eqref{prob:primal_dual_prob}.
%These are the primal feasibility, dual feasibility, and complementary slackness conditions. 
% and $s \perp y$ is the complementarity slackness condition.
% , meaning that $s_i y_i = 0$ for all $i$~\citep{scs_quadratic}. 
We can compactly write these conditions as a linear complementarity problem~\citep[\Sec 3]{scs_quadratic}, \ie, the problem of finding a $u=(x,y) \in \reals^{m + n}$ such that 
% \gh{Let's add a "where $M=$ and $q=$" to this equation. It's not clear otherwise.}
\begin{equation}\label{prob:LCP_problem_compact}
\mathcal{C} \ni u \perp Mu + q \in \mathcal{C}^*,
  % \begin{array}{ll} \mbox{find} & u, v\\
  % \mbox{subject to} & v = Mu + q \\
  % & \mathcal{C} = \mathbf{R}^n \times \mathbf{R}_+^m  \ni u \perp v \in \{0\}^n \times \mathbf{R}_+^m,
  % \end{array} \quad \begin{array}{ll} \mbox{maximize} & - (1/2) x^T P x - b^T y \\
  % \mbox{subject to} & A^T y + Px + c = 0 \\
  % & y \geq 0,
  % \end{array}
  \quad \text{where}
  \quad M = \begin{bmatrix}
      P & A^T \\
      -A & 0
      \end{bmatrix}\in \reals^{(m+n) \times (m+n)}, 
\end{equation}
and $q=(c, b) \in \reals^{m+n}$.  
Here, $\mathcal{C} = \reals^n \times \reals_+^m$ and $\mathcal{C}^* = \{0\}^n \times \mathbf{R}_+^m$ denotes the dual cone to $\mathcal{C}$, \ie, $\mathcal{C}^* = \{w \mid w^Tu \ge 0,\;u \in \mathcal{C}\}$. 
% Note that $M$ is monotone, \ie, $M + M^T \succeq 0$ because $P \succeq 0$.
This problem is equivalent to finding $u \in \reals^{m +m}$ that satisfies the following inclusion~\citep[\Exmp 26.22]{bauschke2011convex}\citep[\Sec 3]{scs_quadratic}
\begin{equation}\label{prob:monotone_inclusion_prob}
0 \in Mu + q + N_{\mathcal{C}}(u),
\end{equation}
where $N_{\mathcal{C}}(u)$ is the normal cone for cone $\mathcal{C}$ defined as $N_{\mathcal{C}}(z) = \{x \mid (y - u)^Tx \le 0,\forall y \in \mathcal{C}\}$ if $u \in \mathcal{C}$ and $\emptyset$ otherwise. 
Of importance to us to ensure convergence of the algorithm we define next is the fact that $Mu+q+N_{\mathcal{C}}(u)$ is maximal monotone; see~\citep[\Sec 2.2]{lscomo} for a definition. 
This follows from $P \succeq 0$, $\mathcal{C}$ being a convex polyhedron, and~\eqref{prob:primal_dual_prob} always admitting an optimal solution~\citep[\Thm 7]{lscomo}~\citep[\Thm 11]{lscomo}.

\begin{algorithm}[h]
  \caption{The DR Splitting algorithm for $k$ iterations to solve problem~\eqref{prob:monotone_inclusion_prob}.
    % \bda{Specify this is DR for solving
    %   \eqref{prob:monotone_inclusion_prob}
    %   and not the $0\in F(x)+G(x)$ form presented in the text.
    % }
  }\label{alg:DR-splitting}
  %\bda{Inputs: I think it's helpful to clearly specify
    % the inputs and connect the parts of the algorithm to specific
    % equations in the text.}\\
    \textbf{Inputs:} initial point $z^0$, problem data $(M, q)$, tolerance $\epsilon$, $k$ number of iterations\\
    \textbf{Output: } approximate solution $z^k$ \\
\For{$i=0,\dots,k-1$}{
	$u^{i+1} = \left(M+I\right)^{-1}\left(z^i - q\right)$\\
	$\tilde{u}^{i+1} = \Pi_{\mathcal{C}}\left(2u^{i+1} - z^i\right)$\\
	$z^{i+1} = z^i + \tilde{u}^{i+1} - u^{i+1}$
	% \If{$\|u^i-\tilde{u}^i\| \le \epsilon$}{ \Return $z^i$}
}
\end{algorithm}
\paragraph{Douglas-Rachford splitting.}
We apply Douglas-Rachford (DR) splitting~ \citep{dr_splitting_sum_operators,dr_splitting} to solve problem~\eqref{prob:monotone_inclusion_prob}.
DR consists of evaluating the {\it resolvent} of operators $Mu + q$ and $N_{\mathcal{C}}$, which for an operator $F$ is defined as $(I + F)^{-1}$~\citep[\Page 40]{lscomo}.
By noting that the resolvent of $Mu + q$ is $(M+I)^{-1}(z-q)$ and the resolvent of $N_{\mathcal{C}}(u)$ is 
$\Pi_{\mathcal{C}}(z)$, \ie, the projection onto $\mathcal{C}$~\citep[\Eqn 2.8, \Page 42]{lscomo}, we obtain Algorithm~\ref{alg:DR-splitting}.

% considers the problem of finding a vector such that $0 \in F(x) + G(x)$ where $F$ and $G$ are both maximal monotone operators. \bda{define maximal monotone?}
% We give the exact iterations of DR splitting in Algorithm \ref{alg:DR-splitting}: 
% At each iteration, we solve a linear system and a projection onto the set $\mathcal{C}$ which we denote by $\Pi_{\mathcal{C}}$. 
The linear system in the first step is always solvable since $M + I$ has full rank~\citep{scs_quadratic}, but it varies from problem to problem. 
The projection onto $\mathcal{C}$, however, is the same for all problems and simply clips negative values to zero and leaves non-negative values unchanged. 
For compactness, in the remainder of the paper, we write Algorithm~\ref{alg:DR-splitting} as
\begin{equation}\label{eq:op.DR}
z^{i+1} = \op \left(z^i\right)\quad \text{where} \quad \op(z) = z + \Pi_{\mathcal{C}} \left( 2 (M+I)^{-1}(z-q) - z \right) - (M+I)^{-1}(z-q).
\end{equation}
% \bda{Shall we have an explicit section talking about computing
%   the derivatives/unrolling through this? Seems like an
%   interesting point}
We make the dependence of $T$ on $\theta$ explicit here as $M$ and $q$ are parametrized by $\theta$. 
DR splitting is guaranteed to converge to a fixed point $z^\star \in \fix \op$ such that $\op(z^\star) = z^\star$. Algorithm~\ref{alg:DR-splitting} returns an approximate solution $z^k$, from which we can recover an approximate primal-dual solution to~\eqref{prob:primal_dual_prob} by computing $(x^k, y^k) = u^k = (M+I)^{-1}(z^k - q)$ and $s^k = b - Ax^k$.
% \bnote{Fix this to avoid using $z^\star$ but only approximate part.}
% To recover a solution $(x^*,s^*)$ to the primal in \eqref{prob:primal_dual_prob} and $y^*$ to the dual from the output $(\tilde{u},u,z)$ of Algorithm \ref{alg:DR-splitting}, we let $(x^*,y^*)=u$ and $(0,s^*)=u+z-\tilde{u}$.
% \gh{Please check this. I am a little confused that we aren't including $u$ at all in Figure 1. Don't we care about returning $x^*,y^*$ at the very end?}
 %Define $v^{k+1}=u^{k+1}+z^k-2 \tilde{u}^{k+1}$. 
%Then let $v^k=\left(0, s^k\right)$. 
%From the iterates, it is easy to verify that $s^k \in \mathcal{K}$, $y^k \in \mathcal{K}^\star$, and $s \perp y$ are satisfied for all $k$. 
% Additionally, we can bound the primal residual, $\|Ax^k + s - b\|_2$, and the dual residual, $\|A^T y^k + P x^k + c\|_2$, by the fixed point residual, $\|\op\left(z^k\right)-z^k\|_2$.
%\noteBartolomeo{Formatting is weird. Algorithm2e is strange but should have a ``ruled'' option. Not sure why it does not work here}
% \gh{Shouldn't the return here include u and $\tilde{u}$ as well?}
% \noteRajiv{Is it worth reserving $k$ to only refer to the number evaluation iterations in our architecture rather than the value that we iterate over? i.e. for i=0,...,k do. And also write $z^{i+1} = \op(z^i)$ instead of with $k$.}
% \paragraph{Our end-to-end learning framework.}

% \gh{Have the issue from above here: how does $z$ relate back to $(x^*,y^*)$ not clear to me that it is the dual var?}
\paragraph{Our end-to-end learning architecture.} Our architecture consists of two modules as in Figure~\ref{fig:nn_architecture}. The first module is a NN with weights $\mathcal{W}$: it predicts a good-quality initial point (or warm-start), $\zwh$, to Algorithm~\ref{alg:DR-splitting}  from the parameter $\theta$ of the QP in \eqref{prob:primal_dual_prob}. We assume that the NN has $L$ layers with ReLU activation functions~\citep{relu_popular}. We then write
\begin{equation*}
  \zwh = \hw(\theta) = h_L\left(h_{L-1} \dots h_1(\theta)\right),
\end{equation*}
where $h_l(y_l) = \left(W_l y_l + b_l\right)_+$, with $\{W_l\}_{l=0}^L$ being the weight matrices and $\{b_l\}_{l=0}^L$ the bias terms. Here, $\hw(\cdot)$ is a mapping from $\reals^{d}$ to $\reals^{m+n}$ corresponding to the prediction and we denote the set of all such mappings by $\mathcal{H}$. We emphasize the dependency of $h$ on the weights and bias terms via the subscript $\mathcal{W} = (W_1, b_1, \dots, W_L, b_L)$. 
The second module corresponds to $k$ iterations of DR splitting from the initial point $\zwh$. It outputs an approximate solution $z^k_{\mathcal{W}}(\theta)$, from which we can recover an approximate solution to~\eqref{prob:primal_dual_prob} as explained above. Using the operator in~\eqref{eq:op.DR}, we write
\begin{equation*}
\Ak(\zwh)=z^k_{\mathcal{W}}(\theta).
\end{equation*}
%denote this mapping. 
% \begin{equation*}
% \Ak(\hat{z}) = z^{k} \textrm{ where } z^1 = \op(\hat{z}), z^{i+1} = \op\left(z^i\right)
% \end{equation*}
% \noteRajiv{this feels clunky to me -- to have 3 equations}
%$z^{i+1} = \op(z^i)$ and $z^1 = \op(\hat{z})$.
%Thus, our architecture is a mapping that is composed of two modules: the NN prediction and $k$ iterations of DR splitting. 
To obtain the solution to a QP given parameter $\theta$, we simply need to perform a forward pass of the architecture, \ie,  compute $\Ak(\hw(\theta))$, with $k$ chosen as needed. %once we have trained the NN, we are free to evaluate any number of iterations that satisfy the latency requirements of the application at hand.
%The DR splitting algorithm applied to a specific problem given an initial point can be written as a mapping from that initial point to the final point, $z^k$. Let $\Ak(\theta)$ be this mapping for a fixed $\theta$. So $\Ak(\hat{z}) = z^{k}$ where $z^{i+1} = \op(z^i)$ and $z^1 = \op(\hat{z})$.

%and $F(z^k) = (\psi I + (1-\psi) \Pi )(C(z^k))$. $C$ denotes the Cayley operator $C = 2 \textbf{prox}_{\gamma f} - I$ and $\psi = \frac{\mu}{\gamma + \mu}$.

%Thus, our neural network architecture is a mapping that composes the two modules mentioned above. Let $\mathcal{G}_{\theta} = \{h \circ \mathcal{A}_{\theta} \mid h \in \mathcal{H} \}$. Then our neural network is $g \in \mathcal{G}_{\theta}$. Note that the architecture of the neural network depends on $\theta$ itself.

\paragraph{Learning task.}
We define the loss function as the fixed-point residual of operator $T_{\theta}$, \ie,
\begin{equation}
\label{eq:loss}
\Lo(z) = \|\op(z) - z\|_2.
\end{equation}
This loss measures the distance to convergence of Algorithm~\ref{alg:DR-splitting}. 
%The only variables that we optimize over are the weights $\mathcal{W}$ of the NN, as the second module is problem-specific and does not change. 
The goal is to minimize the expected loss, which we define as the {\it risk},
\begin{equation*}
R(\hw) = \mathbf{E}_{\theta \sim \mathcal{D}}\left[\Lo\left(\Ak(\hw(\theta))\right)\right],
\end{equation*}
with respect to the weights $\mathcal{W}$ of the NN.
% \bda{we could know it but the true expectation could just be intractable}
In general, we cannot evaluate $R(\hw)$ exactly and, instead, %Often, we only have access to sample data points rather than the underlying distribution. Even if we did know the underlying distribution it could be intractable to minimize $R(\hw)$. For these reasons,
we  minimize the {\it empirical risk}
%Since we don't know the underlying distribution , $\mathcal{D}$, we instead attempt to minimize the empirical risk,
%Thus we want to solve the optimization problem
\begin{equation}
\hat{R}(\hw) = (1/N) \sum_{i=1}^N \Loi \left(\Aki(\hw(\theta_i))\right).
\label{eq:empirical-risk}
\end{equation}
% \begin{equation*}
%   \begin{array}{ll} \mbox{minimize}_{\hw \in \mathcal{H}} & \hat{R}(\hw) = \frac{1}{N} \sum_{i=1}^N \Loi\Aki(\hw(\theta_i))).\\
%   \end{array}
%   \end{equation*}
Here, $N$ is the number of training problems and we work with a full-batch approximation, though mini-batch or stochastic approximations can also be used~\citep{opt_for_ml}.
% \bda{we should clarify that $N$ is the dataset size here and this
%   is the full batch approximation, but most machine learning
%   methods/optimizers use minibatch/stochastic approximations to this objective}

% \bda{expand upon the differentiability of the DR?} 

%We can think of our l2ws neural network composed with the loss function as a mapping from $\mathbf{R}^d$ to $\mathbf{R}$. We define the function class we search over as $\mathcal{S} = \{s \mid s(\theta) = \Lo(\Ak(\hw(\theta)))\} \text{  where  } \hw \in \mathcal{H}$.

% \gh{Doesn't it make sense here to discuss differentiability starting with module 1 and then moving onto module 2 rather than the other way round?}
\paragraph{Differentiability of our architecture.} To see that we can differentiate $\Lo$ with respect to $\mathcal{W}$, note that the second module consists of repeated linear system solves and projections onto $\mathcal{C}$ (see Algorithm~\ref{alg:DR-splitting}). Since the linear systems always have unique solutions, $u^{i+1}$ is linear in $z^i$ and the linear system solves are differentiable. Furthermore, as the projection step involves clipping non-negative values to zero, it is differentiable everywhere except at zero. In the first module, the NN consists entirely of differentiable functions except for the ReLU activation function, which is likewise differentiable everywhere except at zero. 

\section{Generalization bounds}\label{section:gen}

In this section, we provide an upper bound on the expected loss $R(h_{\mathcal{W}})$ of our framework for any $h_{\mathcal{W}} \in \mathcal{H}$. This bound involves the empirical expected loss $\hat{R}(h_{\mathcal{W}})$, the Rademacher complexity of the NN appearing in the first module \emph{only} and a term which decreases with both the number $k$ of iterations of DR splitting and the number $N$ of training samples. To obtain this bound, we rely on the fact that DR splitting on~\eqref{prob:monotone_inclusion_prob} achieves a linear convergence rate. More specifically, following \citep[\Thm 1]{banjac_scs_lin_conv}, we have that
\begin{equation}\label{eq:lin_conv}
    \dist_{\fix \op}(\op(z)) \leq \beta_{\theta} \dist_{\fix \op}(z)
\end{equation}
where $\dist_{\mathcal{S}}(x)=\min \{||x-y|| ~|~ y\in \mathcal{S}\}$ and $\beta_{\theta} \in (0,1)$ is the rate of linear convergence for problem with parameter $\theta$. We now state the result.

\begin{theorem}\label{thm:rademacher}
% Let $\beta \in (0, 1)$ be the slowest rate of linear convergence of algorithm \eqref{alg:DR-splitting} applied to problem \eqref{prob:LCP_problem_compact} over all $\theta \in \Theta$. 
%Let $\beta \in (0,1)$ be the maximum value that $\beta_{\theta}$ takes over $\Theta$.
% and $\kappa$ be the largest linear regularity term of operator $\op$ over all $\theta \in \Theta$.
Let $\beta=\max_{\theta \in \Theta} \beta_{\theta}$ for $\beta_{\theta}$ as in~\eqref{eq:lin_conv}.
Assume that $\mathcal{H}$ is the set of mappings defined in Section~\ref{sec:framework} with the additional assumption that for any $h_{\mathcal{W}} \in \mathcal{H}$, $\dist_{\fix \op}(\hw(\theta)) \leq B$ for some $B > 0$ and any $\theta \in \Theta$. Then, 
%Let $\mathcal{H}$ be a set of mappings from $\reals^d$ to $\reals^{m+n}$ with bounded norm to the optimal solution for all the parameters in our family. 
%That is, $\mathcal{H} \subseteq \{\hw \mid \dist_{\fix \op}(\hw(\theta)) \leq B \quad \forall \theta \in \Theta\}$. 
%Then we achieve the following bounds using the empirical Rademacher complexity. 
with probability at least $1 - \delta$ over the draw of i.i.d samples,
% \begin{equation*}
% R(h) \leq \hat{R}(h) + 2 c_0 \mathcal{R}_N(\mathcal{H}) + c_1 \frac{\log(2/\delta)}{2N}
% \end{equation*}
\begin{equation*}
% R(h_{\mathcal{W}}) \leq \hat{R}(h_{\mathcal{W}}) + 4 \sqrt{2} \beta^k \Rad(\mathcal{H}) + 6 \sqrt{2} \beta^k B \log(1/\delta)/(2N),\quad \forall h_{\mathcal{W}} \in \mathcal{H},
R(h_{\mathcal{W}}) \leq \hat{R}(h_{\mathcal{W}}) + 2 \sqrt{2} \beta^k \left(2 \Rad(\mathcal{H}) + B \log(1/\delta)/(2N)\right),\quad \forall h_{\mathcal{W}} \in \mathcal{H},
\end{equation*}
where $k$ is the number of iterations of DR splitting in the second module, $N$ is the number of training samples, $\Rad(\mathcal{H})$ is the Rademacher complexity of $\mathcal{H}$, and $\beta \in (0,1)$.
% where $c_0 = \pfct, c_1 = \pfc B$.
% \begin{equation*}
% c_0 = \sqrt{\avgt}, c_1 = \sqrt{\avg} B
% \end{equation*}
\end{theorem}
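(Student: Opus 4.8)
The plan is to convert the linear-convergence rate~\eqref{eq:lin_conv} into a statement that the loss contracts geometrically in the number of iterations $k$, and then feed the resulting uniformly bounded loss class into the textbook Rademacher-complexity generalization bound.

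\textbf{Step 1: an operator-theoretic bound on the $k$-iteration loss.} First I would prove that, for every $\theta \in \Theta$ and every $z$,
\begin{equation*}
\Lo\bigl(\Ak(z)\bigr) \;\le\; \sqrt{2}\,\beta^{k}\,\df(z).
\end{equation*}
Two ingredients enter. Applying~\eqref{eq:lin_conv} $k$ times in a row gives $\df(\Ak(z)) \le \beta_\theta^{k}\,\df(z) \le \beta^{k}\,\df(z)$. It then suffices to control the fixed-point residual of an arbitrary point $w$ by its distance to $\ft$: since $\op$ is nonexpansive (indeed firmly nonexpansive, being a Douglas--Rachford operator) and leaves $\ft$ pointwise fixed, letting $w^\star$ be the projection of $w$ onto $\ft$ and combining $\op(w^\star)=w^\star$ with the triangle inequality and the nonexpansiveness/averagedness of $\op$ yields $\Lo(w) = \|\op(w)-w\|_2 \le \sqrt{2}\,\df(w)$ (the precise constant is not essential). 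Taking $w = \Ak(z)$ and chaining the two inequalities gives the claim.

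\textbf{Step 2: bounded loss class, symmetrization and concentration.} By Step 1 together with the standing assumption $\df(\hw(\theta)) \le B$, every sample loss $\theta \mapsto \Lo(\Ak(\hw(\theta)))$ is nonnegative and bounded above by $\sqrt{2}\,\beta^{k} B$, uniformly over $\hw \in \mathcal{H}$. Let $\Phi = \{\,\theta \mapsto \Lo(\Ak(\hw(\theta))) : \hw \in \mathcal{H}\,\}$. I would then run the standard machinery: McDiarmid's inequality applied to $\sup_{\hw \in \mathcal{H}}(R(\hw) - \hat R(\hw))$, whose one-coordinate bounded differences are at most $\sqrt{2}\,\beta^{k}B/N$, followed by symmetrization, to obtain, with probability at least $1-\delta$,
\begin{equation*}
R(\hw) \;\le\; \hat R(\hw) + 2\,\Rad(\Phi) + \sqrt{2}\,\beta^{k} B \cdot (\text{deviation term in } N,\delta), \qquad \forall\,\hw \in \mathcal{H}.
\end{equation*}

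\textbf{Step 3: transferring $\beta^{k}$ onto $\Rad(\mathcal{H})$, and assembly.} The crux --- and the step I expect to be the main obstacle --- is bounding $\Rad(\Phi)$ by $O(\beta^{k})\,\Rad(\mathcal{H})$ rather than merely by $O(1)\,\Rad(\mathcal{H})$. A naive contraction argument (Talagrand, or the vector-valued version) uses only that $\Lo$ and $\Ak$ are $1$-Lipschitz and therefore \emph{loses} the geometric factor; the point is that Step 1 must be upgraded to show the composite map $z \mapsto \Lo(\Ak(z))$ is $O(\beta^{k})$-Lipschitz on the region reached by $\mathcal{H}$ --- the linear rate~\eqref{eq:lin_conv} should force $\Ak$ to contract the directions transversal to $\ft$ by $\beta^{k}$ --- after which the vector-contraction inequality for Rademacher complexities transfers this Lipschitz constant onto $\mathcal{H}$, giving $\Rad(\Phi) \le 2\sqrt{2}\,\beta^{k}\,\Rad(\mathcal{H})$ up to the universal constant. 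Substituting this into the Step 2 bound and absorbing constants (the factor $\sqrt{2}$ appearing both from the residual-versus-distance estimate and from the contraction inequality) produces $R(\hw) \le \hat R(\hw) + 2\sqrt{2}\,\beta^{k}\bigl(2\,\Rad(\mathcal{H}) + B\log(1/\delta)/(2N)\bigr)$ for all $\hw \in \mathcal{H}$; finally $\beta \in (0,1)$ because $\beta = \max_{\theta\in\Theta}\beta_\theta$ with each $\beta_\theta \in (0,1)$ and $\Theta$ compact.
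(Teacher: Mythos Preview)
Your Steps 1 and 2 track the paper closely: Step 1 is the paper's Lemma~1 (the paper gets the constant $2$ from triangle inequality and nonexpansiveness rather than $\sqrt{2}$, but as you note the constant is inessential), and Step 2 is exactly the invocation of the standard bounded-loss Rademacher bound that the paper cites as~\eqref{gen_bd_rad}.

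The divergence is in Step 3, and here there is a genuine gap in your plan. You propose to upgrade Step 1 to the statement that $z \mapsto \Lo(\Ak(z))$ is $O(\beta^k)$-Lipschitz and then apply the vector contraction inequality. But the linear rate~\eqref{eq:lin_conv} is a statement about $\df$, not about the Lipschitz constant of $\Ak$: $\op$ is nonexpansive and acts as the identity on $\ft$, so $\Ak$ has Lipschitz constant $1$, and it is not clear that composing with $\Lo$ recovers a $\beta^k$ factor for the difference $\Lo(\Ak(z_1))-\Lo(\Ak(z_2))$ in general. The paper sidesteps this issue entirely. Its Lemma~2 exploits two facts that you already have but do not use in this way: (i) $\Lo \ge 0$, so for a single Rademacher sign $\epsilon$ one has $2\,\Expect\!\bigl[\sup_{z\in Z_\theta}\epsilon\,\Lo(\Ak(z))\bigr] = \sup_{z\in Z_\theta}\Lo(\Ak(z)) - \inf_{w\in Z_\theta}\Lo(\Ak(w)) \le \sup_{z\in Z_\theta}\Lo(\Ak(z))$; and (ii) by Step~1, this supremum is at most $2\beta^k\|\bar z_\theta - \tilde z_\theta\|_2$ for $\bar z_\theta$ the maximizer and $\tilde z_\theta$ any fixed point. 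The $\ell_2$-norm is then converted back into a Rademacher average over coordinates via the Khintchine-type inequality $\|x\|_2 \le \sqrt{2}\,\Expect\bigl|\sum_j \epsilon_j x_j\bigr|$ (this is where the $\sqrt{2}$ in the final bound comes from), after which taking a supremum over $Z_\theta$ and summing over samples with the induction from the vector-Rademacher machinery yields $\ERad_S(\mathcal{S}) \le 2\sqrt{2}\,\beta^k\,\ERad_S(\mathcal{H})$.

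In short: the paper never needs the composite map to be $\beta^k$-Lipschitz. It only needs the \emph{range} of $\Lo\circ\Ak$ to be $O(\beta^k B)$, which is exactly your Step~1, together with nonnegativity and Khintchine. Your plan would work if you replace the Lipschitz-plus-contraction argument in Step~3 with this range-plus-Khintchine argument.
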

%A very similar bound can be obtained with the empirical Rademacher complexity~\citep[\Thm 3.3]{ml_foundations}. 
In settings where we can upper bound the Rademacher complexity of $\mathcal{H}$, for example in the case of NNs which are linear functions with bounded norm, or 2-layer NNs with ReLU activation functions, we are able to provide a bound on the generalization error of our framework which makes the dependence on $k$ and $N$ even more explicit~\citep{2layerrademacher,two_layer_relu_gen}. 
% \gh{Please provide appropriate refs here}

\begin{corollary}
Let $\mathcal{H}$ be the set of linear functions with bounded norm, i.e., $\mathcal{H}=\{h \mid h(\theta)=W\theta\}$ where $\theta \in \reals^d$, $W \in \reals^{(m+n) \times d}$ and $(1/2)||W||_F^2 \leq B$ for some $B > 0$. Then, with probability at least $1 - \delta$ over the draw of i.i.d samples,  
\begin{equation*}
% R(h_{\mathcal{W}}) \leq \hat{R}(h_{\mathcal{W}}) + 4 \sqrt{2} \beta^k \rho_2(\theta) \sqrt{2d/N} + 6 \sqrt{2} \beta^k B \log(1/\delta)/(2N),\quad \forall h_{\mathcal{W}} \in \mathcal{H},
R(h_{\mathcal{W}}) \leq \hat{R}(h_{\mathcal{W}}) + 2\sqrt{2}\beta^k \left(2 \rho_2(\theta) \sqrt{2d/N} + B \log(1/\delta)/(2N)\right),\quad \forall h_{\mathcal{W}} \in \mathcal{H},
\end{equation*}
where $k$ and $N$ are as defined in Theorem \ref{thm:rademacher}, and $\rho_2(\theta)=\max_{\theta \in \Theta}||\theta||_2$~\citep[\Thm 5.10]{ml_foundations}.
\end{corollary}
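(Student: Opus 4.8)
The plan is to obtain the corollary as a direct specialization of Theorem~\ref{thm:rademacher}: everything there is already explicit except $\Rad(\mathcal{H})$, so it suffices to upper bound the Rademacher complexity of the linear class $\mathcal{H} = \{h \mid h(\theta) = W\theta\}$ under the constraint $(1/2)\|W\|_F^2 \le B$, i.e.\ $\|W\|_F \le \sqrt{2B}$, and substitute. First I would check that the distance hypothesis of Theorem~\ref{thm:rademacher} is inherited for free: the norm bound on $W$ makes $\|h_{\mathcal{W}}(\theta)\| = \|W\theta\|_2 \le \sqrt{2B}\,\rho_2(\theta)$ bounded on the compact set $\Theta$, and $\dist_{\fix \op}(h_{\mathcal{W}}(\theta)) \le \|h_{\mathcal{W}}(\theta)\|_2 + \dist_{\fix \op}(0)$, which is finite (the fixed-point set is nonempty) and, by compactness of $\Theta$, uniformly bounded; so Theorem~\ref{thm:rademacher} applies, with $B$ reused as the name of this bound.

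For the Rademacher term I would run the standard argument for norm-bounded linear predictors. Starting from the empirical Rademacher complexity over the $N$ training parameters, $\Rad(\mathcal{H}) = (1/N)\,\Expect_\sigma \sup_{\|W\|_F \le \sqrt{2B}} \sum_{i=1}^N \sigma_i^\tpose W \theta_i$, rewrite the inner sum as the Frobenius inner product $\big\langle W,\ \sum_{i=1}^N \sigma_i \theta_i^\tpose \big\rangle$ and apply Cauchy--Schwarz to pull out $\|W\|_F \le \sqrt{2B}$, leaving $\Expect_\sigma \big\| \sum_i \sigma_i \theta_i^\tpose \big\|_F$. Jensen's inequality bounds this by $\big(\Expect_\sigma \big\| \sum_i \sigma_i \theta_i^\tpose \big\|_F^2\big)^{1/2}$, and expanding the square and using that the Rademacher variables are independent and mean zero kills the cross terms, leaving $\big(\sum_i \|\theta_i\|_2^2\big)^{1/2} \le \sqrt{N}\,\rho_2(\theta)$ (up to a factor from the output dimension; see below). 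This is precisely the linear-hypothesis Rademacher bound of \citep[\Thm 5.10]{ml_foundations}; plugging it into Theorem~\ref{thm:rademacher} gives the claimed inequality.

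The step I expect to be delicate is the bookkeeping of dimension factors created by the fact that $h_{\mathcal{W}}$ is vector-valued (output in $\reals^{m+n}$), which is what ultimately pins down the constant multiplying $\rho_2(\theta)/\sqrt{N}$ in the statement. A coordinate-by-coordinate treatment — one scalar Rademacher bound per output coordinate, then summed — would cost an extra factor of order $\sqrt{m+n}$, so the argument must instead keep all $m+n$ Rademacher coordinates together and remain consistent with the (vector-)contraction step used inside the proof of Theorem~\ref{thm:rademacher}; the precise form $\rho_2(\theta)\sqrt{2d/N}$ reflects that particular reduction together with the norm budget on $W$. Everything else — invoking Theorem~\ref{thm:rademacher}, Cauchy--Schwarz, Jensen, and independence of the $\sigma_i$ — is routine.
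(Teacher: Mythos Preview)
The paper does not give an explicit proof of the corollary; it is presented as an immediate consequence of Theorem~\ref{thm:rademacher} once the Rademacher complexity of norm-bounded linear predictors is plugged in via the cited bound~\citep[\Thm 5.10]{ml_foundations}. Your plan --- verify the boundedness hypothesis, bound $\Rad(\mathcal{H})$ by the Cauchy--Schwarz/Jensen argument for linear classes, and substitute into Theorem~\ref{thm:rademacher} --- is exactly that implicit argument written out, so your approach matches the paper's.
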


\section{Numerical experiments}
% \noteBartolomeo{Cleanup this paragraph. Add computing platform used, operating system, language, mention JAX, add url of the github repo. Mention what you measure (time, iterations, etc.)}
We now illustrate our method with examples of quadratic optimization problems deployed and repeatedly solved in control and portfolio optimization settings where rapid solutions are important for real-time execution and backtesting.
Our architecture was implemented in the JAX library~\citep{jax} with Adam~\citep{adam} training optimizer.
% We conduct experiments in three different settings: an oscillating masses control problem, a vehicle control problem, and a portfolio optimization problem.
All computations were run on the Princeton HPC Della Cluster.
% We measure the fixed point residuals of unseen test data as a function of the number of iterations evaluation.
% We also report the fractional reduction of iterations for different models compared with a cold start without any learning.
% We use the JAX automatic differentiation library \citep{jax} with Adam~\citep{adam} to train our architecture.
We use $10000$ training problems and evaluate on $2000$ test problems.
In our examples we use a NN with three hidden layers of size $500$ each. 
Our code is available at \url{https://github.com/stellatogrp/l2ws}.
% \bnote{Best reduction when number of iterations applied is similar or close to the ones used in training.}
% \bnote{More precise here.}

\newcommand{\cblock}[3]{
  \hspace{-1.5mm}
  \begin{tikzpicture}
    [
    node/.style={square, minimum size=10mm, thick, line width=0pt},
    ]
    \node[fill={rgb,255:red,#1;green,#2;blue,#3}] () [] {};
  \end{tikzpicture}%
}
\begin{figure}
    \centering
    \ifpreprint
    \includegraphics[width=\textwidth]{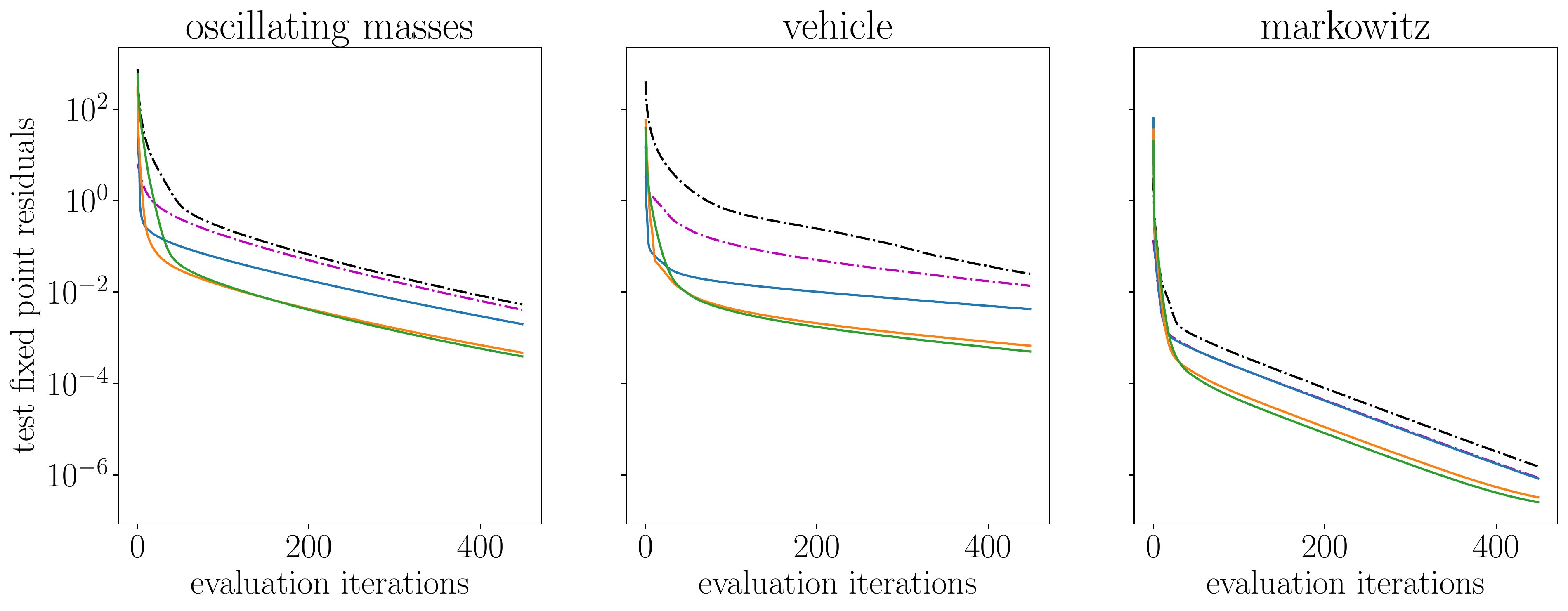}
    \else
    \includegraphics[width=.8\textwidth]{figures/all_eval_iters.pdf}
    \fi
    \cblock{0}{0}{0} no warm-start \hspace{1mm}
    \cblock{191}{0}{191} nearest neighbor warm-start \hspace{1mm}
    learned warm-start $k=$\{\hspace{-1mm}\cblock{31}{119}{180} $5$
        \cblock{255}{127}{14} $15$
        \cblock{44}{160}{44} $50$\}\hspace*{-2mm}
    \caption{
    % We plot the test fixed point residuals over evaluation iterations for different models.
    We plot the test fixed point residuals for different warm-starts of DR splitting.
    We train our architecture with $k=5,15,$ and $50$ DR iterations with loss function \eqref{eq:empirical-risk}.
    We compare our results against a random initialization (black) and against warm-starting DR splitting with the nearest neighbor from the train set (magenta).
    Left: oscillating masses example.
    Middle: vehicle dynamics example.
    Right: portfolio optimization example.
    }
    \label{fig:result_plots}
\end{figure}

\subsection{Oscillating masses}
We consider the problem of controlling a physical system that involves connected springs and masses~\citep{fast_mpc_boyd},\citep[System 4]{mpc_primal_active_sets}. This can be formulated as the following QP:
% The controls are actuators that exert tension on the masses
% ~\citep{fast_mpc_boyd},\citep[System 4]{mpc_primal_active_sets}.
% We use system $4$ in which describes how to scale up the problem from the smaller system originally from~\citep{fast_mpc_boyd},
%In model predictive control, we optimize over a finite time-horizon, but only implement the first control input. Then in the next timestep, we have a new initial point and solve a new QP. Thus, it is natural to have $x_0$ be the parameter in this setup.
\begin{equation*}
\begin{array}{ll} 
\label{eq:oscmasses}
\mbox{minimize} & x_{T}^T Q_{T} x_{T} + \sum_{t=1}^{T-1} x_{t}^T Q x_{t} + u_t^T R u_t \\
\mbox{subject to} & x_{t+1} = A x_t + B u_t \quad t=0, \dots, T-1, \\
& u_{\textrm{min}} \leq u_t \leq u_{\textrm{max}} \quad t=0, \dots, T-1 \\
& x_{\textrm{min}} \leq x_t \leq x_{\textrm{max}} \quad t=1, \dots, T,\\
& x_0 = x_{\rm init}\\
\end{array}
\end{equation*}
where the states $x_t \in \reals^n_x$ and the inputs $u_t \in \reals^{n_u}$ are subject to lower and upper bounds. Matrices $A \in \reals^{n_x \times n_x}$ and $B\in\reals^{n_x \times n_u}$ define the system dynamics. 
The horizon length is $T$ and the parameter $\theta$ is initial state $x_{\rm init}$. Matrices $Q\in \symm_{+}^{n_x}$ and $R \in \symm_{++}^{n_u}$ define the state and input costs at each stage, and $Q_T \in \symm_{+}^{n_x}$ the final stage cost.

\paragraph{Numerical example.}
We consider $n_x=36$ states, $n_u=9$ control inputs, and a time horizon of $T=50$. Matrices $A$ and $B$ are obtained by discretizing the state dynamics with time step $0.5$~\citep[System 4]{mpc_primal_active_sets}.
We set $u_{\textrm{min}} = -u_{\textrm{max}} = 1/2$ and $x_{\textrm{min}} = -x_{\textrm{max}} = 2$.
We set $Q_t=I$ for $t=1, \dots, T$, and $R_t=I$ for $t=1, \dots, T-1$.
We sample $\theta=x_{\rm init}$ uniformly in~$[-2,2]^{36}$. 
Figure~\ref{fig:result_plots} and Table~\ref{tab:oscmasses} show the convergence behavior of our method.
% \bnote{complete. Add what bounds and matrices details}.
\begin{table}
\ifpreprint 
\small
\else
  \footnotesize
  \fi
  \centering
    \renewcommand*{\arraystretch}{1.0}
  \caption{Oscillating masses problem. 
  % We compare the number of iterations required to reach accuracies with different models: DR splitting with the learned models of $k=5,15,50$, no learning, and a nearest neighbor warm-start. 
  We compare the number of iterations of DR splitting required to reach different levels of accuracy with different warm-starts (learned warm-start with $k=5,15,50$, no warm-start, and a nearest neighbor warm-start).
  The reduction columns are the iterations reduced as a fraction of the no learning iterations.} 
  \label{tab:oscmasses}
  \begin{tabular}{llllllllll}
    \toprule
    ~ & no learning & \multicolumn{2}{c}{nearest neighbor} & \multicolumn{2}{c}{train $k=5$} & \multicolumn{2}{c}{train $k=15$} & \multicolumn{2}{c}{train $k=50$}\\
     \cmidrule(lr){2-2} \cmidrule(lr){3-4} \cmidrule(lr){5-6} \cmidrule(lr){7-8} \cmidrule(lr){9-10}
    $\epsilon$ & iters & iters & reduction & iters & reduction & iters & reduction & iters & reduction \\
    \midrule
    % Add directly from csv reader
    % the csv file has names colnameA, colnameB
    \csvreader[head to column names, late after line=\\]{./data/mpc_table.csv}{
    accuracies=\colB,
    no_learn_iters=\colA,
    naive_ws_iters=\colC,
    naive_ws_red=\colD,
    traink5_iters=\colE,
    traink5_red=\colF,
    traink15_iters=\colG,
    traink15_red=\colH,
    traink50_iters=\colI,
    traink50_red=\colJ,
    }{\colA & \colB & \colC & \colD & \colE & \colF & \colG & \colH & \colI & \colJ} 
    \bottomrule
  \end{tabular}
\end{table}

% \begin{figure}[h]%
% 	\centering
% 	\includegraphics[width=0.6\textwidth]{}
% 	\caption{Oscillating masses problem. For different models, we plot the fixed point residuals on unseen data over different evaluation iterations.}%
% 	\label{fig:oscillating-masses-plot}%
% \end{figure}

\subsection{Vehicle dynamics control problem}
We consider problem of controlling a vehicle, modeled as a parameter-varying linear dynamical system~\citep{vehicle_dynamics}, to track a reference trajectory~\citep{borrelli_mpc_primal_dual}. We formulate it as the following QP
% The dynamics of the vehicle come from \citep{vehicle_dynamics}. 
% The dynamics are dependent on the velocity, and the velocity enters in a non-linear fashion. The controls refer to the ... and the states are the slip angle, yaw rate, roll, and roll rate. There are box constraints on the control inputs and constraints that limit the rate of changes of the controls.
\begin{equation*}
\begin{array}{ll} 
\mbox{minimize} & (y_T - y_T^{\textrm{ref}})^T Q (y_T - y_T^{\textrm{ref}}) + \sum_{t=1}^{T-1} (y_t - y_t^{\textrm{ref}})^T Q_T (y_t - y_t^{\textrm{ref}}) + u_t^T R u_t \\
\mbox{subject to} & x_{t+1} = A(v) x_t + B(v) u_t + E(v) \delta_t \quad t=0, \dots, T-1 \\
&|u_t| \leq  \bar{u}, \quad | u_t - u_{t-1}| \leq  \overline{\Delta u}, \quad t=0, \dots, T-1 \\
& y_t = C x_t,\quad t=0,\dots,T-1\\
& x_0 = x_{\rm init},
\end{array}
\end{equation*}
where $x_t \in \reals^4$ is the state and $u_t \in \reals^3$ is the input, and $\delta_t \in \reals$ is the driver steering input, which we assume is linear over time.
We aim to minimize the distance between the output, $y_t \in \reals^3$ and the reference trajectory $y_t^{\textrm{ref}} \in \reals^3$ over time.
Matrices $Q\in \symm_{+}^{n_x}$ and $Q_T \in \symm_{+}^{n_x}$ define the state costs, $R \in \symm_{++}^{n_u}$ the input cost, and $C \in \reals^{3 \times 4}$ the output $y_t$.
The term $v \in \reals$ is the longitudinal velocity of the vehicle that parametrizes $A \in \reals^{4 \times 4}, B \in \reals^{4 \times 3}$, and $E \in \reals^{4}$.
Vectors $\bar{u}$ and $\overline{\Delta u}$ bound the magnitude of the inputs and change in inputs respectively. 
The parameters $\theta$ for the problem are the initial state $x_{\rm init}$, the initial velocity~$v$, the previous control input~$u_{-1}$, the reference signals~$y_t^{\rm ref}$ for $t=0,\dots,T$, and the steering inputs~$\delta_t$ for $t=0,\dots,T-1$.
% , $(x_0, u_{-1}, \{y_t^{\textrm{ref}}\}_{t=1}^T, \{\delta_t\}_{t=0}^{T-1})$. 

\paragraph{Numerical example.} 
The time horizon is $T=30$.
Matrices $A,B,E$ result from discretizing the dynamics~\citep{vehicle_dynamics}.
% The matrices in the objective are diagonal.
% The diagonal elements of $Q=Q_T$  are $(30,200,50)$ and the diagonal elements of $R$ are $(10^{-5}, 10^{-6}, 6*10^{-6})$.
% Matrix $C$ is constructed so that $y_t$ takes the first, second, and fourth values of the state, $x_t$.
We sample all parameters uniformly from their bounds: the velocity $v \in [2,35]$, the output $y_t \in [-\bar{y}, \bar{y}]$ where $\bar{y}=(25, 40, 30)$ in degrees, and the previous control $u_{-1} \in [-\bar{u}, \bar{u}]$ where $\bar{u} = 10^3 (1, 20, 30)$.
We sample the initial steering angle from $[-45,45]$ and its linear increments from $[-30,30]$.
% The previous control parameter is sampled from the bounds dictated by $\bar{u} = 10^3 (1, 20, 30)$.
Figure~\ref{fig:result_plots} and Table~\ref{tab:vehicle} show the performance of our method.
\begin{table}
\ifpreprint \small \else \footnotesize \fi
  \centering
      \renewcommand*{\arraystretch}{1.0}
  \caption{Vehicle problem. 
  We compare the number of iterations of DR splitting required to reach different levels of accuracy with different warm-starts (learned warm-start with $k=5,15,50$, no warm-start, and a nearest neighbor warm-start).
  The reduction columns are the iterations reduced as a fraction of the no learning iterations.} 
  \label{tab:vehicle}
  \begin{tabular}{llllllllll}
  % \small
    \toprule
    ~ & no learning & \multicolumn{2}{c}{nearest neighbor} & \multicolumn{2}{c}{train $k=5$} & \multicolumn{2}{c}{train $k=15$} & \multicolumn{2}{c}{train $k=50$}\\
    \cmidrule(lr){2-2} \cmidrule(lr){3-4} \cmidrule(lr){5-6} \cmidrule(lr){7-8} \cmidrule(lr){9-10}
    $\epsilon$ & iters & iters & reduction & iters & reduction & iters & reduction & iters & reduction \\
    \midrule
    % Add directly from csv reader
    % the csv file has names colnameA, colnameB
    \csvreader[head to column names, late after line=\\]{./data/vehicle_table.csv}{
    accuracies=\colB,
    no_learn_iters=\colA,
    naive_ws_iters=\colC,
    naive_ws_red=\colD,
    traink5_iters=\colE,
    traink5_red=\colF,
    traink15_iters=\colG,
    traink15_red=\colH,
    traink50_iters=\colI,
    traink50_red=\colJ,
    }{\colA & \colB & \colC & \colD & \colE & \colF & \colG & \colH & \colI & \colJ} 
    \bottomrule
  \end{tabular}
\end{table}

% \begin{figure}[h]%
% 	\centering
% 	\includegraphics[width=0.6\textwidth]{}
% 	\caption{Vehicle dynamics problem. For different models, we plot the fixed point residuals on unseen data over different evaluation iterations.}%
% 	\label{fig:vehicle-plot}%
% \end{figure}

\subsection{Portfolio optimization}
% Since \cite{markowitz} won the nobel prize in 1952 for his work on portfolio optimization, the Markowitz optimization problem has received extensive attention from the optimization community. 
We consider the portfolio optimization problem where we want to allocate assets to maximize the risk-adjusted return~\citep{markowitz,cvx_portfolio},
% find a non-negative weighting over financial assets that minimizes the risk minus the returns. 
% The risk of a portfolio, $x$, is modeled with the quadratic form, $x^T \Sigma x$ where $\Sigma \in \mathbf{S}^{n,n}$ is the estimated covariance matrix and is symmetric. 
% The returns of the portfolio is measured as $\mu^T x$ where $\mu \in \mathbf{R}^n$ is the vector of estimated returns. 
% Here, $\Sigma$ is the covariance matrix, $\mu$ is the returns vector, annd $\rho$ is a hyperparameter.
% In the finance industry, it is important to evaluate how well models perform. 
% Backtesting is a common example of this where models are evaluated based on historical data. 
% For some trading practitioners, it is important to evaluate hundreds of thousands of models. 
% Thus, it is preferable to be able to solve these problems quickly. We consider the portfolio optimization problem~\citep{markowitz},
\begin{equation*}
\begin{array}{ll} \mbox{maximize} & \rho \mu^T x - x^T \Sigma x\\
\mbox{subject to} & \mathbf{1}^T x = 1,\quad x \geq 0,
\end{array}
\end{equation*}
where $x\in \reals^{n}$ represents the portfolio, $\mu \in \reals^n$ the expected returns, $1/\rho > 0$ the risk-aversion parameter, and $\Sigma \in \symm_{+}^{n}$ the return covariance.
For this problem, $\theta = \mu$.
% These portfolio problems naturally admit a parametric form since the problem data from problem to problem can often be quite similar. 
% For example, it is common to update the risk model monthly. On the other hand, the returns estimates are often proprietary. 
% There are sophisticated predictive models that financial practitioners use to get the returns vectors. 
% Moreover, the returns vector is updated more frequently than the risk model; thus it is natural to select the parameter as $\mu$ in this setting.

\paragraph{Numerical example.}
We use real-world stock return data from $3000$ popular assets from 2015-2019 \citep{eod}.
% \footnote{We use the \href{https://data.nasdaq.com/databases/EOD/documentation}{EOD} dataset . This data was obtained and used solely by Princeton University.}.
% We set $\Sigma$ to the empirical return covariance $\hat{\Sigma}$ and $\rho=0.01$.
We use an $l$-factor model for the risk and set $\Sigma = F \Sigma_F F^T + D$ where $F \in \reals^{n,l}$ is the factor-loading matrix, $\Sigma_F \in \symm_{+}^{l}$ estimates the factor returns, and $D \in \symm_{+}^{n}$ is a diagoal matrix accounting for additional variance for each asset also called the idiosyncratic risk.
We compute the factor model with $15$ factors by using the same approach as in \citep{cvx_portfolio}.
The return parameters are $\mu = \alpha (\hat{\mu_t} + \epsilon_t)$ where $\hat{\mu_t}$ is the realized return at time $t$, $\epsilon_t \sim \mathcal{N}(0,\sigma_{\epsilon}I)$, and $\alpha=0.24$ is selected to minimize the mean squared error $\mathbf{E}\|\mu_t - \hat{\mu}_t\|_2^2$~\citep{cvx_portfolio}.
We iterate and repeatedly cycle over the five year period to sample a $\mu$ vector for each of our problems.
% We sample $\theta = \mu$ from a uniform  distribution in the interval $\hat{\mu} + [-0.001,0.001]$, which ranges across $10 \%$ of the magnitude of the empirical average returns $\hat{\mu}$.
Figure~\ref{fig:result_plots} and Table~\ref{tab:portfolio} show the performance of our method.
% \bnote{Complete description?}

\begin{table}
\ifpreprint \small \else \footnotesize \fi
  \centering
      \renewcommand*{\arraystretch}{1.0}
  \caption{Markowitz problem. 
  % We compare the number of iterations required to reach accuracies with different models: DR splitting with the learned models of $k=5,15,50$, no learning, and a nearest neighbor warm-start. 
  We compare the number of iterations of DR splitting required to reach different levels of accuracy with different warm-starts (learned warm-start with $k=5,15,50$, no warm-start, and a nearest neighbor warm-start).
  The reduction columns are the iterations reduced as a fraction of the no learning iterations.} 
  \label{tab:portfolio}
  \begin{tabular}{llllllllll}
  % \small
    \toprule
    ~ & no learning & \multicolumn{2}{c}{nearest neighbor} & \multicolumn{2}{c}{train $k=5$} & \multicolumn{2}{c}{train $k=15$} & \multicolumn{2}{c}{train $k=50$}\\
    \cmidrule(lr){2-2} \cmidrule(lr){3-4} \cmidrule(lr){5-6} \cmidrule(lr){7-8} \cmidrule(lr){9-10}
    $\epsilon$ & iters & iters & reduction & iters & reduction & iters & reduction & iters & reduction \\
    \midrule
    % Add directly from csv reader
    % the csv file has names colnameA, colnameB
    \csvreader[head to column names, late after line=\\]{./data/markowitz_table.csv}{
    accuracies=\colB,
    no_learn_iters=\colA,
    naive_ws_iters=\colC,
    naive_ws_red=\colD,
    traink5_iters=\colE,
    traink5_red=\colF,
    traink15_iters=\colG,
    traink15_red=\colH,
    traink50_iters=\colI,
    traink50_red=\colJ,
    }{\colA & \colB & \colC & \colD & \colE & \colF & \colG & \colH & \colI & \colJ} 
    \bottomrule
  \end{tabular}
\end{table}

% \begin{figure}[h]%
% 	\centering
% 	\includegraphics[width=0.6\textwidth]{}
% 	\caption{Oscillating masses problem. For different models, we plot the fixed point residuals on unseen data over different evaluation iterations.}%
% 	\label{fig:markowitz-plot}%
% \end{figure}

\appendix
\section{Proof of the generalization bound}
% \subsection{Theorem \texorpdfstring{\eqref{thm:str_cvx_dist}}{} proof}\label{proof:str_cvx_dist}
% If the operator $\op(z)$ is contractive in $z$ for all $\theta \in \Theta$, there is only one optimal solution for each problem, $z^\star(\theta)$. Since our algorithm is $k$ steps of applying a $L$-contractive operator, we know that $\Ak(\hat{z})$ is $L^k$-Lipschitz. The loss function itself, $L$ is $(1+L)$-Lipschitz since we have
% \begin{align*}
% |\Lo(z) - \Lo(w)| &= |\|\op(z)-z\|_2 - \|\op(w)-w\|_2| \\
% &\leq \|\op(z)-\op(w) + z - w\|_2  \\
% &\leq \|\op(z)-\op(w)\|_2 + \|z - w\|_2  \\
% &\leq (1+L) \|z - w\|_2.
% \end{align*}

% The first inequality comes from the reverse triangle inequality. 
% The second inequality follows from the triangle inequality. 
% The third inequality comes from the contractive property of $\op$. 
% To get the contraction factor for $\Lo(\Ak(\hat{z}))$, we finish the proof with the following lines,
% \begin{align}
% |\Lo(\Ak(\hat{z})) - \Lo(\Ak(\hat{w}))| &\leq (1+L) \|\Ak(\hat{z}) - \Ak(\hat{w})\|_2 \label{lip_proof:5}\\
% &\leq (1+L) L^k \|\hat{z} - \hat{w}\|_2. \label{lip_proof:6}
% \end{align}
% The first line uses the Lipschitz property from $\Lo$ derived above. 
% The second line follows from the contractive factor of $\Ak(\hat{z})$.

\subsection{Proof of Theorem \texorpdfstring{\ref{thm:rademacher}}{}}\label{proof:str_cvx_rademacher}
% We can think of our entire NN as a mapping from $\mathbf{R}^d$ to $\mathbf{R}$.
% \bda{To $\mathbf{R}$ looks strange, is this referring to the $s$ in the next
% that's the neural network + DR iterations + loss?}
For ease of notation, let $s(\theta) = \Lo\left(\Ak(\hw(\theta))\right)$. 
The function class we consider is $\mathcal{S} = \{s \mid s(\theta) = \Lo\left(\Ak(\hw(\theta))\right), \hw \in \mathcal{H} \}$.
% The function class we search over we call $\mathcal{S} = \{s \mid s(\theta) = \Lo(\Ak(\hw(\theta))), \hw \in \mathcal{H} \}$.
The risk and empirical risk for $s$ are  $R(s) = \mathbf{E}[s(\theta)]$ and $\hat{R}(s)= (1/N) \sum_{i=1}^N s(\theta_i)$, respectively.
% Using \citep{bartlett_rademacher} and \citep[\Thm 3.3]{ml_foundations}, we have the following. 
Assume that the loss is bounded over all $s \in \mathcal{S}$ and $\theta \in \Theta$, $s(\theta) \leq b$. Note that $\hat{R}(s) = R(\hw)$ and $R(s) = \hat{R}(\hw)$. For any $\delta > 0$, with probability at least $1 - \delta$ over the draw of an i.i.d. sample $S$ of size $N$, each of the following holds ~\citep{bartlett_rademacher}\citep[\Thm 3.3]{ml_foundations}:
\begin{equation}\label{gen_bd_rad}
R(\hw) \leq \hat{R}(\hw) + 2 \Rad_S(\mathcal{S}) + b \sqrt{\log 1/\delta / (2N)},
\end{equation}
for all $\hw \in \mathcal{H}.$
To prove Theorem~\ref{thm:rademacher}, we need to bound $\Rad_S(\mathcal{S})$ and $b$.

\begin{lemma}\label{lemma:lemma1}
  Let $\op$ satisfy equation~\eqref{eq:lin_conv}. 
  Take $\beta=\max_{\theta \in \Theta} \beta_{\theta}$.
  Then, 
  \begin{equation}
    \Lo(\Ak(z)) \leq \pfc \dist_{\fix \op}(z) \quad \fath.
  \end{equation}
\end{lemma}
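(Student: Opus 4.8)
The plan is to bound the fixed-point residual $\Lo(\Ak(z)) = \|\op(\Ak(z)) - \Ak(z)\|_2$ by relating it to the distance to the fixed-point set, and then to exploit the linear convergence estimate~\eqref{eq:lin_conv} to bring in the factor $\beta^k$. The first ingredient I would establish is that the fixed-point residual of $\op$ at any point $w$ is controlled by $\dist_{\fix \op}(w)$: namely $\|\op(w) - w\|_2 \le 2\,\dist_{\fix \op}(w)$. This follows because for any fixed point $z^\star \in \fix\op$ we have $\op(w) - w = (\op(w) - \op(z^\star)) - (w - z^\star)$, and $\op$ is nonexpansive (it is the DR operator built from resolvents of maximal monotone operators, as noted after~\eqref{prob:monotone_inclusion_prob}), so $\|\op(w) - \op(z^\star)\|_2 \le \|w - z^\star\|_2$; the triangle inequality then gives $\|\op(w) - w\|_2 \le 2\|w - z^\star\|_2$, and minimizing over $z^\star$ yields the claim. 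This is where the constant $2$ in $\pfc = 2\beta^k$ comes from.

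Next I would apply this with $w = \Ak(z) = \op^k(z)$, obtaining $\Lo(\Ak(z)) \le 2\,\dist_{\fix\op}(\op^k(z))$. Then I would iterate the linear convergence bound~\eqref{eq:lin_conv}: applying it $k$ times gives $\dist_{\fix\op}(\op^k(z)) \le \beta_\theta^k \,\dist_{\fix\op}(z) \le \beta^k\,\dist_{\fix\op}(z)$, using $\beta_\theta \le \beta$ and $\beta \in (0,1)$. Combining the two inequalities yields
\begin{equation*}
\Lo(\Ak(z)) \le 2\beta^k\,\dist_{\fix\op}(z) = \pfc\,\dist_{\fix\op}(z),
\end{equation*}
which holds for every $\theta \in \Theta$ since both steps are valid for each fixed $\theta$ (the operator $\op$, its fixed-point set, and the rate $\beta_\theta$ all depend on $\theta$, but the bounds are uniform once we pass to $\beta = \max_\theta \beta_\theta$).

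I do not expect a serious obstacle here; the argument is short. The one point requiring mild care is the justification that $\op$ is nonexpansive (needed for the residual-vs-distance bound) and that $\fix\op$ is nonempty (so that $\dist_{\fix\op}$ is finite and the minimizing $z^\star$ exists). Both are already supplied by the preceding discussion: maximal monotonicity of $Mu + q + N_{\mathcal{C}}(u)$ guarantees DR splitting converges to a point of $\fix\op$, so $\fix\op \ne \emptyset$, and the DR operator is firmly nonexpansive (hence nonexpansive) as a standard consequence of the resolvent construction. A secondary subtlety is whether one should invoke~\eqref{eq:lin_conv} $k$ times directly or instead only once in combination with nonexpansiveness of $\op^{k-1}$; either route works, and I would take the former since~\eqref{eq:lin_conv} is stated as a one-step contraction on $\dist_{\fix\op}$ and composes cleanly.
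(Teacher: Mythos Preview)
Your proposal is correct and follows essentially the same route as the paper's proof: both bound $\|\op^{k+1}(z)-\op^k(z)\|_2$ by $2\,\dist_{\fix\op}(\op^k(z))$ via the triangle inequality together with nonexpansiveness of $\op$ (using that any $w\in\fix\op$ satisfies $\op(w)=w$), and then iterate~\eqref{eq:lin_conv} $k$ times to obtain the factor $\beta^k$. Your write-up is, if anything, slightly more explicit than the paper's about the iteration of the linear-convergence estimate and about why $\fix\op\neq\emptyset$ and $\op$ is nonexpansive.
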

\begin{proof}
From the definition of the loss $\Lo$ in~\eqref{eq:loss}, for any $w \in \fix \op$ we have
  \begin{align*}
    \Lo(\Ak(z)) &= \|T^{k+1}_{\theta}(z)-\Ak(z)\|_2 \leq \|\Ak(z) - w\|_2 + \| T^{k+1}_{\theta}(z) - w\|_2 \\
    % &= \|\Ak(z) - w\|_2 + \| T^{k+1}_{\theta}(z) - \op(w)\|_2 \\
    &\leq \|\Ak(z) - w\|_2 + \| \Ak(z) - w\|_2 \leq 2 \dist_{\fix \op}(\Ak(z)) \leq 2 \beta^k \dist_{\fix \op}(z),
  \end{align*}
where the first inequality comes from the triangle inequality, the second-inequality from $w \in \fix \op$ and non-expansiveness of $\op$, and the last inequality from the definition of $\dist_{\fix \op}$.
% The last line comes from repeatedly applying the property, $\df(\op(z)) \leq \beta \df(z)$, that is the result of theorem $1$ in \citep{banjac_scs_lin_conv}.
% The last line comes from repeatedly applying equation~\eqref{eq:lin_conv}.
\end{proof}

\begin{lemma}\label{lemma:2}
Assume that $\forall \theta \in \Theta$, $\mathcal{H} \subseteq \{\hw \mid \dist_{\fix \op}(\hw(\theta)) \leq B\}$ and that $\op$ satisfies equation~\eqref{eq:lin_conv} with parameter $\beta$. 
Let $\epsilon$ and $\epsilon_j$ be i.i.d. Rademacher random variables. Then,
\begin{equation}
\textstyle
\Expect\left[\sup_{\hw \in \mathcal{H}} \epsilon \Lo \left(\Ak(\hw(\theta))\right)\right] \leq \pfct \mathbf{E}\left[\sup_{\hw \in \mathcal{H}}  \sum_j \epsilon_j \hw(z_j)\right].
\end{equation}
\end{lemma}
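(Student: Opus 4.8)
The plan is to bound the empirical Rademacher average of the composite class $\mathcal S$ by $2\sqrt2\beta^k$ times that of $\mathcal H$ (after normalizing by $N$), with the factor $2\beta^k$ coming from the one-sided estimate of Lemma~\ref{lemma:lemma1} rather than from any two-sided Lipschitz bound on $\Lo\circ\Ak$. Globally $\Lo\circ\Ak$ is merely non-expansive (or at worst $2$-Lipschitz, as in the proof of Lemma~\ref{lemma:lemma1}, since $\Ak$ is a composition of non-expansive maps and $\Lo$ is controlled by the triangle inequality), and using that alone in a contraction inequality would produce only a $k$-independent constant. The elementary fact I would record first is that, since $\Lo\circ\Ak$ is non-negative and vanishes on $\fix\op$, for every $w\in\fix\op$
\begin{equation*}
  0 \;\le\; \Lo(\Ak(a)) \;=\; \Lo(\Ak(a)) - \Lo(\Ak(w)) \;\le\; 2\beta^k\dist_{\fix\op}(a)\;\le\; 2\beta^k\|a-w\|_2 .
\end{equation*}
In particular, fixing for each $\theta$ a point $w^\star_\theta\in\fix\op$, the difference $\Lo(\Ak(a))-\Lo(\Ak(b))$ is bounded above by $\Lo(\Ak(a))\le 2\beta^k\|a-w^\star_\theta\|_2$ — a ``one-sided, anchored'' surrogate for Lipschitz continuity with constant $2\beta^k$.

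With this in hand I would replay the symmetrization/peeling argument that underlies the vector-valued contraction inequality for Rademacher averages. Write the quantity of interest (with $\theta_1,\dots,\theta_N$ the training parameters and $\sigma_j$ i.i.d.\ Rademacher) as $\Expect_\sigma\sup_{\hw\in\mathcal H}\sum_j \sigma_j\,\ell_{\theta_j}(T^k_{\theta_j}(\hw(\theta_j)))$ and strip the signs one index $j$ at a time. At step $j$, freeze all other signs and use $\Expect_{\sigma_j}\sup_{\hw}[\sigma_j f_j(\hw)+g(\hw)] = \tfrac12\sup_{\hw,\hw'}[f_j(\hw)-f_j(\hw')+g(\hw)+g(\hw')]$ with $f_j(\hw)=\ell_{\theta_j}(T^k_{\theta_j}(\hw(\theta_j)))$; bound $f_j(\hw)-f_j(\hw')$ above by $f_j(\hw)$ and then by $2\beta^k\|\hw(\theta_j)-w^\star_{\theta_j}\|_2$ using the display; and re-express the Euclidean norm with Khintchine's inequality $\|x\|_2\le\sqrt2\,\Expect_{\epsilon_j}|\langle\epsilon_j,x\rangle|$ for a fresh Rademacher vector $\epsilon_j$, absorbing the inner product (and the $\pm$ from the absolute value) back into the supremum over $\hw$ as in the proof of the vector-contraction inequality. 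The anchors $w^\star_{\theta_j}$ drop out because they have zero mean under the fresh $\epsilon_j$. Iterating converts each term $\sigma_j\,\ell_{\theta_j}(T^k_{\theta_j}(\hw(\theta_j)))$ into $2\sqrt2\beta^k\langle\epsilon_j,\hw(\theta_j)\rangle$, which is exactly the asserted inequality; dividing by $N$ gives $\Rad(\mathcal S)\le 2\sqrt2\beta^k\,\Rad(\mathcal H)$, the ingredient then fed into~\eqref{gen_bd_rad} to prove Theorem~\ref{thm:rademacher}.

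The main obstacle is precisely this tension: the constant $2\beta^k$ can only be extracted from the one-sided Lemma~\ref{lemma:lemma1} bound, yet there is no two-sided $2\beta^k$-Lipschitz estimate for $\Lo\circ\Ak$, so an off-the-shelf contraction lemma cannot be invoked with that constant. The one-sided inequality must instead be threaded through the symmetrization identity, which is legitimate only because $\Lo\circ\Ak$ is non-negative and vanishes on $\fix\op$ — indeed, mere pointwise domination of one non-negative function class by another does not imply domination of their Rademacher complexities, so the non-expansive/peeling structure is genuinely needed. Keeping careful track of the fresh Rademacher vectors introduced by Khintchine's inequality so that the final constant is $2\sqrt2$ (and not larger) is the delicate part. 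A minor point to check en route is that $\fix\op$ is non-empty, closed and convex — so the anchors $w^\star_{\theta_j}$ (e.g.\ projections) exist — which holds since $\op$ is non-expansive and, as noted after~\eqref{prob:monotone_inclusion_prob}, admits a fixed point for every $\theta\in\Theta$.
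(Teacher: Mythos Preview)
Your proposal follows essentially the same route as the paper. The paper's proof of Lemma~\ref{lemma:2} also (i) uses the symmetrization identity $2\,\Expect_\epsilon[\sup_{z\in Z_\theta}\epsilon\,\Lo(\Ak(z))]=\sup_{z,w\in Z_\theta}[\Lo(\Ak(z))-\Lo(\Ak(w))]$, (ii) drops the subtracted term by non-negativity of $\Lo\circ\Ak$ and applies Lemma~\ref{lemma:lemma1} to get $2\beta^k\|\bar z_\theta-\tilde z_\theta\|_2$ with $\tilde z_\theta\in\fix\op$, (iii) invokes Khintchine (\citep[\Prop~6]{vector_rademacher}) for the factor $\sqrt2$, and (iv) uses Rademacher symmetry to land on the vector Rademacher average of $\mathcal H$; it then cites \citep[\Thm~3, \Cor~4]{vector_rademacher} for the induction over the $N$ samples, which is precisely the one-index-at-a-time peeling you describe.

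The only cosmetic difference is in how the fixed-point anchor is disposed of after Khintchine: the paper absorbs $\tilde z_\theta$ into a supremum over $Z_\theta=\{\hw(\theta):\hw\in\mathcal H\}$, whereas you argue it vanishes because $\Expect_{\epsilon}\langle\epsilon,w^\star_\theta\rangle=0$ once the absolute value has been symmetrized away. These are the same maneuver read two ways; your articulation of why the constant must come from the \emph{one-sided} Lemma~\ref{lemma:lemma1} bound (rather than a two-sided Lipschitz estimate on $\Lo\circ\Ak$) makes explicit what the paper leaves implicit.
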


\begin{proof}
Denote $Z_{\theta}$ to be the set of possible predictions of $\mathcal{H}$ for a fixed $\theta$. That is, $Z_{\theta} = \{\hw(\theta) \mid \hw \in \mathcal{H}\}$. 
Let $\bar{z}_{\theta} = \argmax_{z \in Z_{\theta}} \Lo(\Ak(\theta)))$. 
% For a given $\theta$, $\bar{z}_{\theta}$ is the prediction that gives the largest possible loss coming from the function class $\mathcal{H}$.
%2 \mathbf{E}[\sup_{\hw \in \mathcal{H}} \epsilon \Lo(\Ak(\hw(\theta)))] &\leq 2 \mathbf{E}[\sup_{z \in Z(\theta)} \epsilon \Lo(\Ak(z))]\\
%2 \mathbf{E}[\sup_{z \in Z(\theta)} \epsilon \Lo(\Ak(z))]&= \sup_{z,w \in Z(\theta)} \Lo(\Ak(z)) - \Lo(\Ak(w))\\
\begin{align*}
\textstyle
2 \mathbf{E}\left[\sup_{\hw \in \mathcal{H}} \epsilon \Lo\left(\Ak(\hw(\theta))\right)\right] &= \textstyle 2 \mathbf{E}\left[\sup_{z \in Z_{\theta}} \epsilon \Lo\left(\Ak(z)\right)\right]\\
&\textstyle=\sup_{z,w \in Z_{\theta}} \Lo\left(\Ak(z)\right) - \Lo\left(\Ak(w)\right) = \Lo\left(\Ak(\bar{z}_{\theta})\right)\\
% &= \Lo\left(\Ak(\bar{z}_{\theta})\right) \label{non_str_line:3}\\
&\textstyle\leq \pfc \dist_{\fix \op}(\bar{z}_{\theta}) \leq \pfc \|\bar{z}_{\theta} - \tilde{z}_{\theta} \|_2\\
% &\leq \pfc \|\bar{z}_{\theta} - \tilde{z}_{\theta} \|_2 \label{non_str_line:5}\\
&\textstyle\leq \pfct \mathbf{E}\left[\left\lvert\sum_{j} \epsilon_j (\bar{z}_{\theta} - \tilde{z}_{\theta})_j\right\rvert \right] \\
&\textstyle\leq \pfct \mathbf{E}\left[\left\lvert\sup_{z,w \in Z_{\theta}} \sum_{k} \epsilon_j z_j - \sum_{j} \epsilon_j w_j \right\rvert \right]\\
&\textstyle= \pfct \left(\mathbf{E}\left[\sup_{z \in Z_{\theta}} \sum_j \epsilon_j z_j\right] + \mathbf{E}\left[\sup_{w \in \hw(\theta)} \sum_j -\epsilon_j w_j\right]\right)\\
% &= \pfcf \mathbf{E}\left[\sup_{z \in Z_{\theta}}  \sum_j \epsilon_j z_j\right] \label{non_str_line:9}\\
&\textstyle= \pfcf \mathbf{E}\left[\sup_{\hw \in \mathcal{H}}  \sum_j \epsilon_j \hw(z_j)\right]
\end{align*}
% Line \eqref{non_str_line:1} is a replacement of taking the supremum over the function class $\mathcal{H}$ by the set of possible predictions, $Z_{\theta}$. 
The second line uses the definition of the Rademacher random variable. The supremum of the difference is achieved by maximizing $\Lo\left(\Ak(z)\right)$ for $z$ and picking $w \in \fix \op$. 
% The minimum value of $0$ is attained if and only if $w \in \fix \op$ . 
The third line uses Lemma~\ref{lemma:lemma1} and we pick any fixed point, $\tilde{z}_{\theta} \in \fix_{\op}$.
% and we use the assumption on equation~\eqref{eq:lin_conv}. 
% In line Line \eqref{non_str_line:5}, any member of $\textrm{fix} \op$ would satisfy the inequality. 
% We pick one that is guaranteed to be in the set $Z_{\theta}$ and we call it $\tilde{z}_{\theta}$. 
% By assumption, we know that for each $\theta$, there exists a fixed point of $\op$ that can be predicted from our function class $\mathcal{H}$. 
The fourth line uses \citep[\Prop 6]{vector_rademacher}. 
The fifth line follows from replacing $\bar{z}_{\theta}$ and $\tilde{z}_{\theta}$ with a supremum over $Z_{\theta}$. 
% We can do this because both  $\bar{z}_{\theta}$ and $\tilde{z}_{\theta}$ are in $Z_{\theta}$. 
In second to last line we remove the absolute value because the same maximum will be attained by maximizing the difference. 
The last line comes from the symmetry of the Rademacher random variable.
\end{proof}
To get the final result, we use \citep[\Thm 3]{vector_rademacher} which involved a induction step to sum over the $N$ samples and Lemma~\ref{lemma:2}. 
Then, \citep[\Cor 4]{vector_rademacher} directly follows and we finish with 
% The result allows us to bound the Rademacher complexity in the averaged operator case. 
% We reduce the Rademacher complexity of $\mathcal{S}$ to the Rademacher complexity of $\mathcal{H}$. 
\begin{align*}
\textstyle \ERad_S(\mathcal{S}) &\textstyle=\Expect_{\sigma}\left[\sup_{\hw \in \mathcal{H}} \frac{1}{N} \sum_{i=1}^N \sigma_i \Loi\left(\Aki(\hw(\theta_i))\right)\right] \\
% \textstyle \ERad_S(\mathcal{S}) &\textstyle= \Expect_{\sigma}\left[\sup_{s \in \mathcal{S}} \frac{1}{N} \sum_{i=1}^N \sigma_i s(\theta_i)\right] \\
% &\textstyle= \Expect_{\sigma}\left[\sup_{\hw \in \mathcal{H}} \frac{1}{N} \sum_{i=1}^N \sigma_i \Loi\left(\Aki(\hw(\theta_i))\right)\right] \\
&\textstyle\leq \pfct \Expect_{\sigma}\left[\sup_{\hw \in \mathcal{H}} \frac{1}{N} \sum_{i=1}^N \sigma_{i}^T \hw(\theta_i)\right] \\
&\textstyle= \pfct \ERad_S(\mathcal{H}).
\end{align*}
The empirical Rademacher complexity of $\mathcal{S}$ over samples $S$ is $\ERad_S(\mathcal{S})$. The inequality comes from Lemma~\ref{lemma:2}.
The last line follows from the definition of the multivariate empirical Rademacher complexity of $\mathcal{H}$.
The worst-case loss is $\pfc B$ which follows from Lemma~\ref{lemma:lemma1}.
Last, we take the expectation to get the same bound for the Rademacher complexity and use Equation~\eqref{gen_bd_rad} to finish the proof.

%Therefore, $L \circ \mathcal{A}$ is $L^k$-Lipschitz.

% Acknowledgments---Will not appear in anonymized version
\acks{The author(s) are pleased to acknowledge that the work reported on in this paper was substantially performed using the Princeton Research Computing resources at Princeton University which is consortium of groups led by the Princeton Institute for Computational Science and Engineering (PICSciE) and Office of Information Technology's Research Computing.}

% \bibliography{bibliography_l4dc}
\bibliography{l2ws_l4dc}

\end{document}